\font\got=eufm10 at 12pt
\newcommand{\nor}[1]{|\hskip -0.6pt | #1 |\hskip -0.6pt |}
\newcommand{\mn}[2]{\{ #1\, ;\, #2 \}}
\newcommand{\sk}[2]{\left\langle #1 , #2\right\rangle}
\newcommand{\skal}[2]{\left( #1 \cdot #2\right)}
\newcommand{\divg}[0]{{\rm div}}
\renewcommand{\geq}[0]{\geqslant}
\renewcommand{\leq}[0]{\leqslant}
\renewcommand{\i}[0]{{\rm i}}
\newcommand{\C}[0]{{\mathbb C}}
\newcommand{\R}[0]{\mathbb{R}}
\newcommand{\pd}[0]{\partial}
\newtheorem{thm}{Theorem}
\newtheorem{lema}{Lemma}
\newtheorem{prop}{Proposition}
\begin{document}

\title[Operators in divergence form with potentials]
{Bilinear embedding for real elliptic differential operators in divergence form with potentials}

\author[Dragicevic]{Oliver Dragi\v{c}evi\'c}
\author[Volberg]{Alexander Volberg}

% \date{April 27, 2011 -- version ``DivFormReal15''}

\address{Oliver Dragi\v{c}evi\'c \\Department of Mathematics\\ Faculty of Mathematics and Physics\\ University of Ljubljana\\ Jadranska 19, SI-1000 Ljubljana\\ Slovenia}
\email{oliver.dragicevic@fmf.uni-lj.si}

\address{Alexander Volberg\\Department of Mathematics\\ Michigan State University\\ East Lansing, MI 48824\\ USA}
\email{volberg@math.msu.edu}

\thanks{The first author was partially supported by the Ministry of Higher Education,
Science and Technology of Slovenia (research program Analysis and Geometry, contract no. P1-0291). The second author was partially supported by NSF grants DMS-0200713, 0501067, 0758552}

\begin{abstract}
We present a simple Bellman function proof of a bilinear estimate for elliptic operators in divergence form with real coefficients  and with nonnegative potentials. The constants are dimension-free. The $p$-range of applicability of this 
estimate is $(1,\infty)$ for any real accretive (nonsymmetric) matrix $A$ of coefficients. 
\end{abstract}

\maketitle

\section{Motivation}
\label{motivation}

The bilinear-embedding-type theorems as the one we prove below for general second order operators in divergence form with potentials, $Lu := -\divg (A \nabla u)+Vu$, where $A$ is a real accretive matrix and $V$ a nonnegative locally integrable function on $\R^n$, have so far played an important part in estimates of several singular integral operators. 
In \cite{PV,DV2} the bilinear embedding was used for $L^p$ estimates of the Ahlfors-Beurling operator $T$. 
In particular, that implied a regularity result for solutions of Beltrami equations at the critical exponent, which, in
its turn, had a geometric significance for the theory of quasiregular maps; see \cite{AIS}. 

When applied to families of operators on spaces of arbitrary dimensions, the bilinear embedding tends to give dimension-free estimates with sharp behaviour in terms of $p$.
The first such example is \cite{DV},
where dimension-free estimates were obtained for Riesz
transforms associated to the classical and the Ornstein-Uhlenbeck Laplacians. 
This approach was continued in
\cite{DV-Sch} 
in the context of arbitrary Schr\"odinger operators with nonnegative potentials. As a special case, dimension-free estimates for the Hermite-Riesz transforms on $L^p$ were confirmed, with linear behaviour with respect to $p$.

Recently, A. Carbonaro and the first author \cite{CD} studied a class of Riesz transforms in the fairly general setting of  complete Riemannian manifolds whose Bakry-Emery Ricci curvature is bounded from below. The study of such operators was initiated by Strichartz \cite{S}. Bakry \cite{B} proved the first dimension-free estimates, whereupon the work of Li \cite{Li} implied that in the general case the behaviour of the norms with respect to $p$ is at most quadratic as $p\rightarrow\infty$. In \cite{CD} this was improved by giving linear estimates. They are sharp. Also, while the proofs in \cite{B} and \cite{Li} are probabilistic, \cite{CD} features the first analytic proof. The bilinear embedding was instrumental for obtaining all of these results.

The operators we consider are also called {\it generalized Schr\"odinger operators}, see \cite
{D}. When $A\equiv I$ we have the usual Schr\"odinger operators $-\Delta+V$. 
The fact that $A\equiv I$ makes them comparatively easy in the
theory of general second order accretive operators in divergence
form, so the emphasis in, say, \cite{DV, DV-Sch, CD, DV1} is on
independence of estimates on the dimension of the underlying manifold.

Yet another place where the bilinear embedding
featured
(under a slight 
disguise) 
is
the sharp estimate of the weighted Hilbert transform in
terms of the $A_p$ norm of the weight \cite{P,PW}.

Of course the dimension-free estimates of Riesz transforms were
known before. One can find an extensive bibliography in \cite{DV} and \cite{Li}.
We think the approach in \cite{DV, DV-Sch, CD, DV1} is a sort of a
unified one. 
Namely, there 
the 
proofs are 
divided into two steps: 1) the
proof of the bilinear embedding (always dimension-free) and 2) a formula
which involves holomorphic calculus of the operator $L$.
We would like to emphasize the importance of the second step
because of its relations with Kato's problem. The holomorphic
calculus of general second order accretive operators in
divergence form also plays crucial part in proving estimates
for the related Riesz transforms, see e.g.
\cite{A, AMN}.

Finally, we would like to acknowledge that the completion of this paper was decisively motivated by recent appearance of a closely related  preprint \cite{AHM} by Auscher, Hoffman and Martell. 
In particular, \cite{AHM} 
explains the connection between the results in this paper and square function estimates in the context of accretive matrices.

\bigskip
\noindent{\bf Acknowledgements.} We are grateful to P. Auscher, S.
Hofmann, C. Kenig and J. Pipher who advised us on general second
order elliptic operators. Special thanks are to J. M. Martell for
several valuable conversations and remarks.

We are particularly grateful to Pascal Auscher whose interest made us resume working on this paper after a pause of over three years. 

\section{Elliptic differential operators in divergent form}
\label{elliptic}

Let $A=[a_{ij}(x)]_{i,j=1}^n$ be a real accretive $n\times n$
matrix function on $\R^n$ with $L^\infty$ coefficients which is uniformly elliptic, meaning that for some $\gamma>0$ and $\forall
\xi\in \C^n, \, x\in\R^n$,
\begin{equation}
\label{elli} 
\text{Re\,}(A(x)\xi\cdot\overline\xi) \geq \gamma \|\xi\|^2\,,
\end{equation}
where $z\cdot w:=\sum_1^nz_jw_j$ for $z,w\in\C^n$.

Let $V$ be a nonnegative locally integrable function on $\R^n$. We consider the operator
$$
Lu := -\,\divg (A \nabla u) +Vu= -\sum_{i,j=1}^n\frac{\pd}{\pd x_i}
\Big(a_{ij}(x) \frac{\pd u}{\pd x_j}(x)\Big ) +V(x)u(x) \,
$$
and the associated semigroup $(P_t :=e^{-tL})_{t\geqslant 0}$. For precise definitions see \cite[Section 1.8]{D} or \cite[Section 4.7]{O}.
Denote furthermore $\tilde f(x,t)=(P_tf)(x)$.

Next, for a smooth complex-valued function $\phi=\phi(x,t)$ on $\R^n\times(0,\infty)$ introduce 
$|\phi|_*^2:=|\nabla\phi|^2+V|\phi|^2$, where
$x=(x_1,\hdots,x_n)$ and 
$\nabla=(\pd_{x_1},\hdots,\pd_{x_n})$ is the spatial gradient. That is,
\begin{equation}
\label{vaha}
|\phi(x,t)|_*^2:=\sum_{j=1}^n\Big|\frac{\pd \phi}{\pd x_j}(x,t)\Big|^2 + V(x)|\phi(x,t)|^2\,.
\end{equation}
Take any number $p\in [2,\infty)$ and denote by $q$ its conjugate exponent, i.e. $1/p+1/q=1$. What follows is the main result of the paper, which we call {\it bilinear embedding theorem}.
\begin{thm}
\label{bilinreal} 
Let the operator $L$ be as above. For any $f,g\in C_c^\infty(\R^n)$ we have
$$
\int_0^\infty\int_{\R^n} |\tilde f(x,t)|_*|\tilde g(x,t)|_*\,dx\,dt\leq 
C_\gamma p\,\nor{f}_p\nor{g}_q\,.
$$
\end{thm}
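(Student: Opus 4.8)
The plan is to use the Bellman function method, following the strategy that has by now become standard for this type of bilinear embedding (as in \cite{DV, DV-Sch, CD}). First I would normalize and truncate: by homogeneity assume $\nor{f}_p=\nor{g}_q=1$, and work on a finite time interval $(\varepsilon, T)$, with the goal of obtaining a bound uniform in $\varepsilon, T$ and then letting $\varepsilon\to0$, $T\to\infty$. Since $f,g\in C_c^\infty$, the heat extensions $\tilde f=P_tf$ and $\tilde g=P_tg$ are smooth, bounded, and decay at infinity in a way that makes all the integrations by parts below legitimate; one should record these regularity facts (or cite \cite{D,O}) as a preliminary lemma.

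The heart of the argument is to choose a Bellman function $Q=Q(\zeta,\eta)$ of two complex variables, adapted to the exponents $p$ and $q$, which is a variant of the function $\zeta,\eta\mapsto |\zeta|^p + |\eta|^q + \delta\,|\zeta|^2|\eta|^{2-q}$-type expressions used previously; the key requirements are (i) $0\le Q(\zeta,\eta)\lesssim |\zeta|^p+|\eta|^q$, so that $Q(\tilde f,\tilde g)$ is integrable with the right bound, and (ii) a pointwise Hessian estimate of the form $-\,d^2Q[(du,dv)] \ge \kappa\,(|du|_*|dv|_*)$ when tested against the relevant gradient vectors, for some constant $\kappa=\kappa(p)\sim 1/p$. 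The accretivity and uniform ellipticity \eqref{elli} of $A$ enter precisely here: because $A$ is only accretive and not symmetric, the quadratic form one must control is $\mathrm{Re}\,(A\nabla u\cdot\overline{\nabla v})$ plus potential terms, and one needs the generalized-Schrödinger analogue of the structural inequality that makes the Bellman Hessian beat this form. I would isolate this as the main lemma of the paper. Then set $b(t):=\int_{\R^n} Q(\tilde f(x,t),\tilde g(x,t))\,dx$; differentiating in $t$, using $\pd_t\tilde f=-L\tilde f$, $\pd_t\tilde g=-L\tilde g$, and integrating the divergence terms by parts, one gets
$$
-\,b'(t)=\int_{\R^n}\big(-d^2Q\big)[\,(\nabla\tilde f,\nabla\tilde g)\,]\;+\;(\text{nonnegative potential terms})\;\ge\;\kappa\int_{\R^n}|\tilde f(x,t)|_*\,|\tilde g(x,t)|_*\,dx.
$$
Integrating this in $t$ over $(\varepsilon,T)$ and using $0\le b(T)$ together with $b(\varepsilon)\le \nor{P_\varepsilon f}_p^p+\nor{P_\varepsilon g}_q^q+\cdots\le C$ (contractivity of $P_t$ on all $L^r$, which holds because $V\ge0$ and $A$ is accretive) yields the theorem with $C_\gamma p\sim \kappa^{-1}$.

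The main obstacle is the construction of the Bellman function and the verification of its differential inequality in the genuinely nonsymmetric, potential-carrying setting: one must produce a single $Q$ whose concavity-type estimate simultaneously dominates the non-self-adjoint principal part $\mathrm{Re}\,(A\nabla\tilde f\cdot\overline{\nabla\tilde g})$ (handled via ellipticity plus a Cauchy–Schwarz/AM–GM juggling that does not lose the factor $1/p$) and the potential cross term $V\,\tilde f\,\overline{\tilde g}$, while keeping the size bound $Q\lesssim |\zeta|^p+|\eta|^q$. A secondary technical point is justifying the differentiation under the integral sign and the boundary behaviour as $\varepsilon\to0$ and $T\to\infty$; this is routine given the a priori smoothness and decay of $\tilde f,\tilde g$, but should be stated carefully. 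Everything else—the normalization, the integration by parts, the final integration in $t$—is bookkeeping.
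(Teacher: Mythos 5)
Your plan follows essentially the same route as the paper: a Nazarov--Treil-type Bellman function of the two variables $(\tilde f,\tilde g)$, a convexity estimate along the heat flow for the lower bound, the size bound $Q\lesssim|\zeta|^p+|\eta|^q$ for the upper bound, and normalization (the paper polarizes instead, which is equivalent). Three places where your sketch glosses over or misstates the actual mechanism are worth flagging. (1) After the chain rule the potential contributes the zeroth-order term $V\,(Q-dQ\cdot v)$, not a Hessian term; so besides the Hessian bound $-d^2Q\geq\delta(\tau|d\zeta|^2+\tau^{-1}|d\eta|^2)$ one needs the companion estimate $Q-dQ\cdot\xi\geq\delta(\tau|\zeta|^2+\tau^{-1}|\eta|^2)$ with the \emph{same} weight $\tau$, so that the gradient and potential pieces combine in a single Cauchy--Schwarz to produce $|\tilde f|_*|\tilde g|_*$. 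Merely knowing that the potential terms are nonnegative, as in your displayed inequality, does not recover the $V|\cdot|^2$ part of $|\cdot|_*$. (2) The nonsymmetric $A$ is tamed not by ``Cauchy--Schwarz juggling'' against $\mathrm{Re}\,(A\nabla\tilde f\cdot\overline{\nabla\tilde g})$ but by the symmetry of the form $\langle d^2Q\,\sigma,\varsigma\rangle$, which lets one replace $A$ by $\widetilde A=(A+A^{T})/2$, take the real square root $\widetilde A^{1/2}$, and rewrite the whole principal part as $\sum_j\langle -d^2Q\,\theta_j,\theta_j\rangle$ with $\theta_j$ assembled from $\widetilde A^{1/2}\nabla v_k$; ellipticity enters only at the very end, to pass from $|\widetilde A^{1/2}\nabla v_k|$ back to $|\nabla v_k|$ at the cost of $\min\{1,\gamma\}$. (3) The spatial integration by parts is not ``routine decay'': the paper truncates with cutoffs $\psi_R$ and kills the error terms using Gaffney-type off-diagonal estimates for $P_t$ and $\sqrt t\,\nabla P_t$ (together with $L^\infty$-contractivity), which is precisely where $f,g\in C_c^\infty$ is used; note also that $Q$ is only piecewise $C^2$, so a mollification step is needed before any of the second-derivative computations.
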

The theorem is also valid for $1<p<2$, in which case the factor on the right becomes $C_\gamma q$. Moreover, it will emerge from the proof of the theorem that one might take $C_\gamma=C\max\{1,\gamma^{-1}\}$, where $C$ is some absolute constant.

When $V=0$, Theorem \ref{bilinreal} 
would obviously follow if we had the following estimate from above:
\begin{equation}
\label{S1} \int_{\R^n} (G_Lu(x))^p dx \leq C\int_{\R^n} |u(x)|^p\, dx
\end{equation}
for all $p\in (1,\infty)$, 
where $G_Lu$ is the square function, i.e.
$$
G_Lu(x) := \Bigl(\int_{0} ^{\infty}|\nabla P_tu(x)|^2\,dt\Bigr)^{\frac12}\,.
$$
In general this inequality is  proved for 
$p\in(q_-(L), q_+(L))$.
Even when $A$ is a real matrix, \eqref{S1} holds only  for $p \in(1, 2+\varepsilon(n))$. See \cite[Corollaries 6.3 - 6.7]{A}.
On the other hand, an interesting paper by Auscher, Hofmann and Martell \cite{AHM} shows that the {\it cone} square function estimate for {\it real} matrices $A$ can be made for the whole range of $p\in (1,\infty)$. The proof seems to be rather delicate, and as far as it stands now the estimates depend on the dimension. Our theorem has a strange feature of being dimension-free.

\section{Bellman function}
\label{Bellman}

The Bellman function technique was introduced into harmonic analysis by Nazarov, Treil and the second author in the 1994 preprint version of their paper \cite{NTV}. The main tool in our proofs will be a particular Bellman function, drawn from a function defined by Nazarov and Treil \cite{NT}. Their example was later extended by the present authors in \cite{DV-Sch}. In this paper we work with a simplified variant of the function from \cite{DV-Sch}; it comprises only two variables, while the functions from \cite{NT, DV-Sch} have four.

Throughout this section we assume that $p\geqslant 2$, $q=p/(p-1)$ and $\delta=q(q-1)/8$ are fixed. Observe that $\delta\sim (p-1)^{-1}$.

Let $\phi:\R_+\times\R_+\longrightarrow\R_+$ be defined by
\begin{equation}
\label{foreman}
\phi(u,v)=
u^{p}+v^{q}+\delta
\left\{
\aligned
& u^2v^{2-q} & ; & \ \ u^p\leqslant v^q\\
& \frac{2}{p}\,u^{p}+\left(\frac{2}{q}-1\right)v^{q}
& ; &\ \ u^p\geqslant v^q\,.
\endaligned\right.
\end{equation}

The Bellman function we use is simply the function $Q:\C\times\C\longrightarrow\R_-$, defined by 
\begin{equation}
\label{rupkina}
Q(\zeta,\eta):=-\frac12 \phi(|\zeta|,|\eta|)\,.
\end{equation}

Our proofs will to a great extent involve estimates of the first- and second-order derivatives of $Q$. Estimating the gradient of $\phi$ is straightforward:
\begin{equation}
\label{mravalzhamieri}
\pd_u\phi\leqslant C(p)\max\{u^{p-1},v\}
\hskip 30pt 
\text{and}
\hskip 30pt 
\pd_v\phi\leqslant Cv^{q-1}\leqslant C(v+1)\,.
\end{equation}
For the last inequality we used that $q-1\leqslant 1$. Now the chain rule naturally implies the estimates of $\pd_\zeta Q$ and $\pd_\eta Q$.

Yet while $Q$ is of class $C^1$, it is not globally $C^2$, because $\phi$ fails to be $C^2$ along the curves $\{v=0\}$ and $\{u^p=v^q\}$ in $\R_+^2$. This proved not to be a major obstacle, since smoothening $Q$ by standard mollifiers enables one in various contexts to apply $Q$ completely rigorously, see \cite{DV-Sch, CD, DV}. 
Hence, for the sake of the clarity of the presentation, we will further proceed as if $Q$ were $C^2$ everywhere, referring the reader to \cite{DV-Sch, CD, DV} for the mollifying procedure. A similar smoothening argument was also carried out in \cite{PV}.
With this understanding, the properties of $Q$ are summarized in the theorem below.

\begin{thm}
\label{bejaz}
For any $u,v\geqslant 0$,
\begin{enumerate}[{\rm (i)}]
\item
\label{anterija}
$0\leqslant \phi(u,v)\leqslant (1+\delta)(u^p+v^q)$.
\end{enumerate}
If $\xi=(\zeta,\eta)\in \C\times\C$ 
then there exists  $\tau=\tau(|\zeta|,|\eta|)>0$ such that \begin{enumerate}[{\rm (i)}]
\addtocounter{enumi}{1}
\item
\label{kupres}
$
{-d^2 Q(\xi) \geqslant \delta\big(\tau|d\zeta|^2+\tau^{-1}|d\eta|^2\big)}$
\item
\label{blagaj}
$
Q(\xi)-
dQ(\xi)\,\xi\geqslant \delta\big(\tau |\zeta|^2+\tau^{-1}|\eta|^2\big).
$
\end{enumerate}
\end{thm}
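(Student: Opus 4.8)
The statement is purely about the elementary function $\phi$ on $\R_+^2$, so the plan is to verify the three items by direct computation in the two regions $u^p\leqslant v^q$ and $u^p\geqslant v^q$, with the choice of $\tau$ dictated by the "off-diagonal" region where the genuinely nontrivial perturbation $u^2v^{2-q}$ lives. Item \eqref{anterija} is immediate: $\phi\geqslant u^p+v^q\geqslant 0$ since $\delta>0$ and the bracketed terms are nonnegative, and for the upper bound one checks that in the region $u^p\leqslant v^q$ one has $u^2v^{2-q}\leqslant v^q$ (because $u^2\leqslant v^{2q/p}=v^{2(q-1)}$ and $2(q-1)+(2-q)=q$), while in the region $u^p\geqslant v^q$ the bracketed expression is a convex combination-type bound $\tfrac2p u^p+(\tfrac2q-1)v^q\leqslant u^p+v^q$ since $\tfrac2p\leqslant 1$ and $\tfrac2q-1\leqslant 1$. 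That gives $\phi\leqslant(1+\delta)(u^p+v^q)$.

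For items \eqref{kupres} and \eqref{blagaj} the strategy is to reduce to $\phi$ via the chain rule and the fact that $Q=-\tfrac12\phi(|\zeta|,|\eta|)$. Writing $|\zeta|=u$, $|\eta|=v$, one has $dQ(\xi)\xi$ and $d^2Q(\xi)$ expressible through $\pd_u\phi,\pd_v\phi$ and the Hessian of $\phi$, modulo the radial-coordinate terms $\tfrac{\pd_u\phi}{u}|d\zeta|^2$ etc. that arise from $d^2|\zeta|$; the key algebraic point is that these extra terms are \emph{nonnegative} and hence only help, because $\phi$ is increasing in each variable. So it suffices to prove the corresponding inequalities for $\phi$ itself:
\begin{equation*}
-d^2\phi(u,v)\ \text{(in the appropriate sense)}\ \geqslant 2\delta(\tau\,|du|^2+\tau^{-1}|dv|^2),\qquad u\pd_u\phi+v\pd_v\phi-\phi\geqslant 2\delta(\tau u^2+\tau^{-1}v^2).
\end{equation*}
In the region $u^p\geqslant v^q$, $\phi=(1+\tfrac{2\delta}{p})u^p+(1+\delta(\tfrac2q-1))v^q$ splits as a sum of a concave power $v^q$ (since $q\leqslant 2$) and the convex power $u^p$; here one takes $\tau$ comparable to $u^{p-2}$ and uses that the "bad" direction is $du$, where $u^p$ is convex but the coefficient $2\delta/p$ is small — this is exactly where the definition $\delta=q(q-1)/8$ enters to absorb constants. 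In the region $u^p\leqslant v^q$ the extra term $\delta u^2v^{2-q}$ must be handled: its Hessian contributes $2\delta v^{2-q}$ in the $uu$-slot (good, convex, $\sim +|du|^2$) but $\delta(2-q)(1-q)u^2 v^{-q}$ in the $vv$-slot, which is \emph{negative} since $q<2$ and $q>1$; one checks this negative contribution is dominated by the $q(q-1)v^{q-2}$ coming from $v^q$ precisely because $u^p\leqslant v^q$ forces $u^2 v^{-q}\leqslant v^{q-2}$, and the numerical factor $\tfrac18$ in $\delta$ leaves room. The natural choice is $\tau=v^{2-q}$ on this region, matching continuously with $u^{p-2}\sim v^{(p-2)q/p}=v^{2-q}$ on the diagonal $u^p=v^q$.

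The main obstacle is the second region-boundary curve $\{u^p=v^q\}$ and the axis $\{v=0\}$, where $\phi$ is only $C^1$: strictly speaking $d^2Q$ does not exist there. As the text already notes, this is handled by mollification — one proves the inequalities in the open regions with uniform constants and the stated $\tau$, verifies the $C^1$ matching so that no negative distributional mass is created along the seams (the jump in the normal derivative of $\pd\phi$ has the right sign), and then passes to the smoothed function. I would therefore present the computation cleanly on each open region, record the one-sided second derivatives on the seams to confirm the mollified Hessian inherits the bound, and choose $\tau(u,v)=\min\{u^{p-2},v^{2-q}\}$ (or any quantity comparable to it), so that both \eqref{kupres} and \eqref{blagaj} hold with the single uniform constant $\delta$.
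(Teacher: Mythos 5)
You should first note that the paper itself does not prove parts \eqref{kupres} and \eqref{blagaj}: it observes that \eqref{anterija} is immediate from the definition and cites \cite{DV-Sch} for the other two. So a self-contained computation is necessarily a different route from the paper's. Your argument for \eqref{anterija} is correct and is exactly the paper's "immediate" verification.

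For \eqref{kupres} and \eqref{blagaj}, however, there are genuine problems. First, your final choice $\tau=\min\{u^{p-2},v^{2-q}\}$ contradicts the per-region choices you describe earlier and is the wrong one of the two: since $u^p\leqslant v^q$ is equivalent to $u^{p-2}\leqslant v^{2-q}$ (because $(q-1)(p-2)=2-q$), your regional prescription ($\tau=v^{2-q}$ on $\{u^p\leqslant v^q\}$ and $\tau\sim u^{p-2}$ on $\{u^p\geqslant v^q\}$) amounts to $\tau=\max\{u^{p-2},v^{2-q}\}$, not the minimum. With the minimum, \eqref{blagaj} actually fails: on $\{u^p\leqslant v^q\}$ the right-hand side contains $\delta\tau^{-1}|\eta|^2=\delta u^{2-p}v^2$, which blows up as $u\to 0$ with $v$ fixed (for $p>2$), while the left-hand side stays bounded. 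Second, the reduction from $Q$ to $\phi$ is incomplete. Writing $-d^2Q$ in polar coordinates in each of $\zeta$ and $\eta$, the angular directions contribute $\phi_u/(2u)$ and $\phi_v/(2v)$ to the diagonal; since the required lower bound $\delta(\tau|d\zeta|^2+\tau^{-1}|d\eta|^2)$ involves the \emph{full} $|d\zeta|^2$ and $|d\eta|^2$, you must prove the quantitative bounds $\phi_u/(2u)\geqslant\delta\tau$ and $\phi_v/(2v)\geqslant\delta\tau^{-1}$, not merely that these terms are nonnegative and "only help." (Both bounds do hold with $\tau=\max\{u^{p-2},v^{2-q}\}$ — for instance $\phi_v/(2v)\geqslant (q/2)\,v^{q-2}\geqslant\delta u^{2-p}$ on $\{u^p\geqslant v^q\}$ uses $u^{p-2}\geqslant v^{2-q}$ there — but they must be checked.) Finally, the absorption of the off-diagonal Hessian entry $\phi_{uv}=2\delta(2-q)uv^{1-q}$ in the region $\{u^p\leqslant v^q\}$, and the verification that the negative part of $\phi_{vv}$ is dominated there, are asserted rather than carried out; this is precisely where the numerical margin built into $\delta=q(q-1)/8$ is consumed, so it cannot be waved through. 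As it stands, the proposal is a plausible outline of the computation done in \cite{DV-Sch}, but not yet a proof.
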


The first property follows immediately from the definition of $\phi$, while the last two were proved in \cite{DV-Sch}.

Let us clarify the notation appearing in the preceding theorem.
Denote by $z_j$ the variables of $Q$, i.e. 
$$
(z_1,z_2):=(\zeta,\eta)\,.
$$

Write
$z_j=\text{\got x}_j+\i \text{\got y}_j$ for $j=1,2$, so that 
$$
\frac{\pd}{\pd z_j}=\frac12\Big(\frac{\pd}{\pd\hskip 0.7pt \text{\got x}_j}-\i\frac{\pd}{\pd \text{\got y}_j}\Big) 
\hskip 20pt \text{and} \hskip 20pt 
\frac{\pd}{\pd\bar z_j }=\frac12\Big(\frac{\pd}{\pd\hskip 0.7pt \text{\got x}_j}+\i\frac{\pd}{\pd \text{\got y}_j}\Big)\,. 
$$
Take $\sigma=(\sigma_1,\sigma_2)\in\C^2$. Then we define
\begin{equation}
\label{pazardzhiskho horo}
dQ(\xi)\sigma=\sum_{j=1}^2\Big(\frac{\pd Q}{\pd z_j}(\xi)\,\sigma_j+\frac{\pd Q}{\pd \bar z_j}(\xi)\,\bar\sigma_j\Big)\,.
\end{equation}
As for the second differential form, when $\xi, \sigma,\varsigma\in\C^2$, define 
\begin{equation*}
\label{pogorelic scherzo}
\aligned
\sk{d^2Q(\xi)\sigma}{\varsigma}:=\hskip -50pt &  \\
&\sum_{k,m=1}^2
\left(
\frac{\pd^2 Q}{\pd z_k\pd z_m}(\xi)\sigma_k\varsigma_m+
\frac{\pd^2 Q}{\pd\bar z_k\pd z_m}(\xi)\bar \sigma_k\varsigma_m+
\frac{\pd^2 Q}{\pd z_k\pd\bar  z_m}(\xi)\sigma_k\bar \varsigma_m+
\frac{\pd^2 Q}{\pd\bar  z_k\pd\bar  z_m}(\xi)\bar \sigma_k\bar \varsigma_m
\right)\,.
\endaligned
\end{equation*}
Note that this expression is always real. It is also symmetric, in the sense that 
\begin{equation}
\label{argerich}
\sk{d^2Q(\xi)\sigma}{\varsigma}=\sk{d^2Q(\xi)\varsigma}{\sigma}\,.
\end{equation}

The meaning of \eqref{kupres} above is the inequality
$$
\sk{-d^2Q(\xi)\sigma}{\sigma}\geqslant \delta\big(\tau |\sigma_1|^2+\tau^{-1}|\sigma_2|^2\big)\,,
$$
while \eqref{blagaj} should be understood through \eqref{pazardzhiskho horo}, with $\sigma=\xi$, of course.

\subsection{Setup of the proof}
Given $C_c^\infty$ functions $f,g:\R^n\rightarrow\C$, define
\begin{equation}
\label{ella sunshine}
v_1(x,t)=P_t f(x)
\hskip 30pt \text{and}\hskip 30pt
v_2(x,t)=P_t g(x)\,
\end{equation}
and $v:=(v_1,v_2)$, i.e.
\begin{equation*}
\label{nina spirova}
\aligned
v(x,t) = (P_t f(x), P_t g(x))\,.
\endaligned
\end{equation*}
Put $b:=Q\circ v$, that is,
\begin{equation}
\label{zelen bore}
b(x,t) = Q(P_t f(x), P_t g(x))\,.
\end{equation}
When proving Theorem \ref{bilinreal} we will try to imitate \cite{DV}. Let us explain what this means. 

Suppose
$\psi\in C^\infty_c(\R^n)$ is a radial function, $\psi\equiv 1$ in
the unit ball, $\psi\equiv 0$ outside the ball of radius $2$, and
$0< \psi< 1$ everywhere else. For $R>0$ define 
$$
\psi_R(x) := \psi\Big(\frac{x}R\Big)\,.
$$
Furthermore, let
\begin{equation*}
\label{kafu}
L' := \frac{\pd}{\pd t}+ L\,.
\end{equation*}
Take $R,T>0$ and consider the integral
$$
I_{R,T}(f,g):=\int_0^T\int_{\R^n}
\psi_R(x)\,L'b(x,t) \,dx\,dt\,,
$$
where $f,g$ are the same functions as in \eqref{zelen bore}.
The proof of Theorem \ref{bilinreal} will rest on the following two estimates of $I_{R,T}$ (lower and upper, respectively). 

\begin{prop}
\label{lovcen}
Let $\gamma,\delta$ be as in \eqref{elli} and \eqref{foreman}. 
For every $(x,t)\in\R^n\times (0,\infty)$ we have
\begin{equation*}
\label{eqbelow} 
L' b(x,t)\geq
2\delta \min\{1,\gamma\}|\tilde f(x,t)|_*|\tilde g(x,t)|_*\,.
 \end{equation*}
\end{prop}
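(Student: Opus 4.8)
The plan is to compute $L'b = (\partial_t + L)(Q\circ v)$ pointwise by the chain rule and then bound each resulting term from below using the convexity/structure properties of $Q$ from Theorem \ref{bejaz}. Since $v=(v_1,v_2)$ with $v_j = P_t(\cdot)$, each $v_j$ satisfies the heat-type equation $\partial_t v_j + L v_j = 0$, i.e. $\partial_t v_j = \divg(A\nabla v_j) - V v_j$. Write $b = Q\circ v$ and apply the (complex) chain rule: $\partial_t b = dQ(v)\,\partial_t v$, and for the spatial part, $-\divg(A\nabla b)$ will produce two kinds of terms — a ``linear'' term $dQ(v)\,(-\divg(A\nabla v))$ coming from differentiating through $Q$ once, and a ``quadratic'' term of the form $-\langle d^2Q(v)\,A\nabla v, \nabla v\rangle$ (schematically, summing over the spatial coordinates $\nabla = (\partial_{x_1},\dots,\partial_{x_n})$, each contributing a copy of the Hessian form in the $\C^2$ variables) coming from the product rule hitting $\nabla v$. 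Finally, the potential term $Vb$ contributes $V\,Q(v)$.

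First I would collect the terms proportional to the equation. The combination $dQ(v)\big(\partial_t v + (-\divg(A\nabla v))\big)$ is \emph{not} simply zero because $\partial_t v_j + L v_j = 0$ involves the extra $+Vv_j$; so what survives is $dQ(v)\,(Vv) = V\,dQ(v)\,v$. Adding the genuine potential term $V\,Q(v)$, the total ``zeroth-order'' contribution is $V\big(Q(v) - dQ(v)\,v\big)$ — wait, signs: it is $V\big(Q(v) - (-dQ(v)\,v)\big)$; one must track carefully, but the upshot is that the potential terms assemble precisely into $V$ times the quantity $Q(\xi) - dQ(\xi)\xi$ appearing in \eqref{blagaj}, which by that estimate is $\geq \delta V(\tau|v_1|^2 + \tau^{-1}|v_2|^2)$. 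Meanwhile the quadratic term, after using ellipticity \eqref{elli} to replace $A$ by $\gamma\,\mathrm{Id}$ in the real part and then invoking \eqref{kupres}, is bounded below by $\gamma\delta\big(\tau|\nabla v_1|^2 + \tau^{-1}|\nabla v_2|^2\big)$, summing the Hessian inequality over the $n$ spatial directions.

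Combining, and writing $\min\{1,\gamma\} = \gamma\wedge 1$, one gets
$$
L'b \geq \delta(1\wedge\gamma)\Big(\tau\big(|\nabla v_1|^2 + V|v_1|^2\big) + \tau^{-1}\big(|\nabla v_2|^2 + V|v_2|^2\big)\Big)
= \delta(1\wedge\gamma)\big(\tau |v_1|_*^2 + \tau^{-1}|v_2|_*^2\big),
$$
recalling the notation \eqref{vaha}. The arithmetic–geometric mean inequality $\tau a^2 + \tau^{-1}b^2 \geq 2ab$ then yields $L'b \geq 2\delta(1\wedge\gamma)\,|v_1|_*|v_2|_* = 2\delta\min\{1,\gamma\}|\tilde f|_*|\tilde g|_*$, which is the claim; note $\tau>0$ drops out at this last step, which is exactly why the two-sided bound in \eqref{kupres}–\eqref{blagaj} with the \emph{same} $\tau$ is the crucial input.

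The main obstacle — and the step requiring genuine care rather than routine computation — is the bookkeeping of the chain rule in the complex ($\C^2$-valued) setting with a non-symmetric real matrix $A$: one must verify that the cross terms arising from $-\divg(A\nabla(Q\circ v))$ genuinely organize into $-\langle d^2Q(v)A\nabla v,\nabla v\rangle$ plus the linear term, and that the real part of $\langle d^2Q(v)A\nabla v,\nabla v\rangle$ can be controlled by $\langle d^2Q(v)\nabla v,\nabla v\rangle$ via \eqref{elli} \emph{despite} $d^2Q$ not being a positive form in general (it is negative, which is why the signs work out). Here the accretivity hypothesis and the precise form of \eqref{kupres} — giving a \emph{diagonal} lower bound $\tau|d\zeta|^2 + \tau^{-1}|d\eta|^2$ rather than a generic positive-definite one — are what make the ellipticity estimate go through cleanly; the smoothness caveat about $Q$ only being $C^1$ is handled by the mollification remark already invoked in the text.
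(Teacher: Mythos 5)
Your overall architecture coincides with the paper's: the chain-rule identity you describe is exactly Lemma \ref{plav} (and you correctly track the potential terms into $V\left[Q(v)-dQ(v)v\right]$, handled by \eqref{blagaj}), and the endgame via \eqref{kupres}, the shared $\tau$, and AM--GM is the paper's as well. The problem is that the one step you yourself flag as ``requiring genuine care'' --- passing from the mixed form $\sum_{i,j} a_{ij}\sk{-d^2Q(v)\,\pd_{x_i}v}{\pd_{x_j}v}$ to the lower bound $\gamma\delta\left(\tau|\nabla v_1|^2+\tau^{-1}|\nabla v_2|^2\right)$ --- is asserted rather than proved. ``Replacing $A$ by $\gamma\,\mathrm{Id}$'' is not licensed by \eqref{elli} alone: the ellipticity condition controls $\mathrm{Re}\,(A\xi\cdot\overline\xi)$ for a single vector $\xi\in\C^n$, whereas here $A$ acts on the spatial index of a tensor-type expression in which the other factor is the (sign-indefinite a priori) Hessian of $Q$. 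This is precisely the content of the paper's proof: one first uses the symmetry \eqref{argerich} to replace $A$ by its symmetric part $\widetilde A$ from \eqref{mozartkugel}, takes the real positive-definite square root $\widetilde A^{1/2}$, sets $\Theta_k=\widetilde A^{1/2}\nabla v_k$ with components $\theta^k_j$, and rewrites the whole sum as $\sum_{j=1}^n\sk{d^2Q(v)\theta_j}{\theta_j}$ with $\theta_j=(\theta_j^1,\theta_j^2)\in\C^2$. Only then is \eqref{kupres} applied (once per spatial direction $j$), and ellipticity enters \emph{last}, via $|\Theta_k|^2=\widetilde A\nabla v_k\cdot\overline{\nabla v_k}=\mathrm{Re}\,(A\nabla v_k\cdot\overline{\nabla v_k})\geq\gamma|\nabla v_k|^2$. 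Note that this is the opposite order to the one you propose.

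Your order (ellipticity first, then \eqref{kupres}) can in fact be salvaged, but it needs an argument you did not supply: the real matrix $M=[\,\sk{-d^2Q(v)\,\pd_{x_i}v}{\pd_{x_j}v}\,]_{i,j=1}^n$ is symmetric by \eqref{argerich} and positive semidefinite because $-d^2Q\geq 0$ (a consequence of \eqref{kupres}), since $\sum_{i,j}c_ic_jM_{ij}=\sk{-d^2Q(v)\,\sigma}{\sigma}$ with $\sigma=\sum_ic_i\pd_{x_i}v$; hence $\mathrm{tr}(\widetilde A M)-\gamma\,\mathrm{tr}(M)=\mathrm{tr}\big((\widetilde A-\gamma I)M\big)\geq 0$, which is what ``replacing $A$ by $\gamma I$'' really means. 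Either way, the symmetrization of $A$ and the positivity of the resulting Gram-type matrix (equivalently, the $\widetilde A^{1/2}$ change of variables) is the missing idea; without it the proposal does not close. The remaining steps of your sketch, including the final AM--GM with the common $\tau$, are correct and match the paper.
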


\begin{prop}
\label{planinom}
For arbitrary test functions $f, g$ and $T>0$,
\begin{equation*}
\label{eqabove} 
\limsup_{R\rightarrow\infty}\int_0^T\int_{\R^n}
\psi_R(x) L' b(x,t) \, dx\, dt\leq \|f\|_p^p +
\|g\|_q^q\,.
\end{equation*}
\end{prop}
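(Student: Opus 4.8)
The plan is to expand $L'b=\pd_t b+Lb=\pd_t b-\divg(A\nabla b)+Vb$ (recall $b=Q\circ v$ is real-valued and $A$ is a real matrix, so $A\nabla b$ is a genuine real vector field), to integrate, and to split
\[
\begin{aligned}
I_{R,T}(f,g)={}&\int_{\R^n}\psi_R\int_0^T\pd_t b\,dt\,dx+\int_0^T\!\!\int_{\R^n}\psi_R\,Vb\,dx\,dt\\
&+\int_0^T\!\!\int_{\R^n}\psi_R\,\bigl(-\divg(A\nabla b)\bigr)\,dx\,dt,
\end{aligned}
\]
denoting the three summands $\mathrm{(I)}_R,\mathrm{(II)}_R,\mathrm{(III)}_R$. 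I will then show that $\mathrm{(I)}_R\leqslant\nor{f}_p^p+\nor{g}_q^q$ for every $R$, that $\mathrm{(II)}_R\leqslant0$ for every $R$, and that $\mathrm{(III)}_R\to0$ as $R\to\infty$; the Proposition follows at once. Since $A$ has only $L^\infty$ coefficients and $f,g\in C_c^\infty$, the $v_j=P_tf,P_tg$ are not classically $C^2$ in $x$ and $L'b$ is not obviously integrable near $t=0$; as already announced in the excerpt, those two points are taken care of by the mollification of $Q$ and by replacing the lower limit $0$ with $\varepsilon>0$ and letting $\varepsilon\to0$, exactly as in \cite{DV,DV-Sch}, so I argue formally.

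For $\mathrm{(I)}_R$ the fundamental theorem of calculus in $t$ gives $\int_0^T\pd_t b\,dt=b(x,T)-b(x,0)=Q(v(x,T))-Q(f(x),g(x))$. By \eqref{rupkina} and Theorem~\ref{bejaz}\,\eqref{anterija} one has $Q\leqslant0$ on all of $\C\times\C$, so $Q(v(x,T))\leqslant0$ may be dropped, while $-Q(f,g)=\tfrac12\phi(|f|,|g|)\leqslant\tfrac{1+\delta}2(|f|^p+|g|^q)$. Hence $\mathrm{(I)}_R\leqslant\tfrac{1+\delta}2\int_{\R^n}\psi_R(|f|^p+|g|^q)\,dx\leqslant\tfrac{1+\delta}2(\nor{f}_p^p+\nor{g}_q^q)$, and since $p\geqslant2$ forces $q\in(1,2]$ and thus $\delta=q(q-1)/8\leqslant\tfrac14<1$, this is at most $\nor{f}_p^p+\nor{g}_q^q$. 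For $\mathrm{(II)}_R$ we simply note that $b=Q\circ v\leqslant0$, $V\geqslant0$ and $\psi_R\geqslant0$, so the integrand is pointwise $\leqslant0$ and $\mathrm{(II)}_R\leqslant0$.

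The real work is $\mathrm{(III)}_R$. As $\psi_R$ has compact support, integration by parts in $x$ is licit and gives $\mathrm{(III)}_R=\int_0^T\!\int_{\R^n}(\nabla\psi_R)\cdot(A\nabla b)\,dx\,dt$, where $|\nabla\psi_R(x)|\leqslant CR^{-1}\mathbf 1_{\{R\leqslant|x|\leqslant2R\}}$ and $|A|\leqslant\nor{A}_\infty$. By the chain rule applied to $Q=-\tfrac12\phi(|\cdot|,|\cdot|)$, the bounds \eqref{mravalzhamieri}, and the fact that $|\pd Q/\pd z_j|=|\pd Q/\pd\bar z_j|$ since $Q$ is real-valued,
\[
|\nabla b|\leqslant C(p)\Bigl(\max\{|v_1|^{p-1},|v_2|\}\,|\nabla v_1|+(|v_2|+1)\,|\nabla v_2|\Bigr),
\]
so it suffices, assuming $\operatorname{supp}f,\operatorname{supp}g\subset B(0,\rho_0)$ and $R>2\rho_0$, to show
\[
\frac1R\int_0^T\int_{R\leqslant|x|\leqslant2R}\bigl(|P_tf|^{p-1}+|P_tg|+1\bigr)\,|\nabla P_tf|\,dx\,dt\longrightarrow0
\]
and its twin with $f,g$ interchanged. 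This is where I would invoke quantitative off-diagonal decay of the semigroup on compactly supported data: Aronson's Gaussian upper bound for the heat kernel of the real uniformly elliptic operator $-\divg(A\nabla\cdot)$ (still valid, by domination, after adding $V\geqslant0$) yields $\nor{P_tf}_{L^\infty(R\leqslant|x|\leqslant2R)}\leqslant Ct^{-n/2}e^{-cR^2/t}\nor{f}_1$, and the Davies--Gaffney $L^2$ off-diagonal estimates for $\sqrt t\,\nabla P_t$ give $\nor{\nabla P_tf}_{L^2(R\leqslant|x|\leqslant2R)}\leqslant Ct^{-1/2}e^{-cR^2/t}\nor{f}_2$, with the same bounds for $g$. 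Inserting these, using H\"older on the annulus (whose volume is $\sim R^n$), bounding $e^{-cR^2/t}\leqslant e^{-cR^2/T}$ for $t\leqslant T$, and integrating over $t\in(0,T)$, each resulting term is $O(R^{N}e^{-cR^2/T})$ for a fixed power $N$, hence tends to $0$. Thus $\mathrm{(III)}_R\to0$.

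Combining the three parts, $\limsup_{R\to\infty}I_{R,T}(f,g)\leqslant\tfrac{1+\delta}2(\nor{f}_p^p+\nor{g}_q^q)\leqslant\nor{f}_p^p+\nor{g}_q^q$. The one genuinely delicate point is the vanishing of the boundary term $\mathrm{(III)}_R$: it requires, for compactly supported data, quantitative Gaussian/$L^2$ off-diagonal decay not merely for $P_t$ but also for $\nabla P_t$, with enough uniformity in $t\in(0,T)$ to survive the time integration. Everything else is elementary, modulo the regularity caveats near $t=0$ and the non-smoothness of $A$, which are absorbed into the mollification scheme of \cite{DV,DV-Sch} already invoked above.
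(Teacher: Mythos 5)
Your proof is correct and follows essentially the same route as the paper: the same three-way splitting of $L'b$, dropping $Vb\leqslant 0$, bounding the time-boundary term by $\tfrac{1+\delta}{2}\int(|f|^p+|g|^q)\leqslant \|f\|_p^p+\|g\|_q^q$, and killing the divergence term via integration by parts, the gradient bounds \eqref{mravalzhamieri}, and off-diagonal decay of $P_t$ and $\sqrt{t}\,\nabla P_t$ on the annulus $\{R\leqslant|x|\leqslant 2R\}$ (the paper uses the $L^2$ off-diagonal estimates of \cite{A} together with $L^\infty$-contractivity of $P_t$ where you invoke Aronson's Gaussian bound, but this is immaterial). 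One small caution: when integrating in $t$ you should not discard the entire exponential as $e^{-cR^2/T}$, since the remaining negative powers of $t$ need not be integrable near $t=0$; instead keep half of it and use $t^{-M}e^{-cR^2/(2t)}\leqslant C_M R^{-2M}$ to absorb them, which is exactly what is implicit in the paper's assertion that the terms disappear after performing $\lim_{R\rightarrow\infty}\int_0^T dt$.
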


\begin{proof}[Proof of Theorem \ref{bilinreal}.]
We use the standard trick of ``polarization".
Together the Fatou lemma 
and Propositions \ref{lovcen} and \ref{planinom} imply
$$
\int_0^\infty\int_{\R^n} |\tilde f(x,t)|_*|\tilde g(x,t)|_*\,dx\, dt\leq
\frac{\max\{1,\gamma^{-1}\}}{2\delta}\,(\|f\|_p^p + \|g\|_q^q)\,.
$$
Now replace $f,g$ by $\lambda f$, $\lambda^{-1}g$, respectively, and take the infimum over $\lambda>0$.
\end{proof}

So what remains is to prove Propositions \ref{lovcen} and \ref{planinom}. This will be done in Sections \ref{below} and \ref{above}, respectively.

\section{Proof of Proposition \ref{lovcen}}
\label{below}

We would like to apply properties \eqref{kupres} and \eqref{blagaj} of the function $Q$ that were given in Theorem \ref{bejaz}. To this end we must first link them to $L'b$.

\begin{lema}
\label{plav}
Fix $(x,t)\in\R^{n+1}_+$. 
If $[a_{ij}]$ is a real accretive $n\times n$ matrix function, then
\begin{equation}
\label{chainrule}
\aligned
L' b (x,t)=\sum_{i,j=1}^n a_{ij}(x) & \sk{-d^2 Q( v) 
\frac{\pd  v}{\pd x_i}(x,t)}{\frac{\pd  v}{\pd x_j}(x,t)} 
+V(x)
[Q( v)-dQ( v) v]
 \,.
\endaligned
\end{equation}
\end{lema}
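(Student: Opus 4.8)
The plan is to compute $L'b = (\partial_t + L)(Q\circ v)$ by direct differentiation, using the fact that each component $v_k$ solves the heat-type equation $\partial_t v_k = -Lv_k = \divg(A\nabla v_k) - Vv_k$. The computation splits naturally according to the two parts of $L' = \partial_t + L$ and the two parts of $L = -\divg(A\nabla\,\cdot\,) + V(\cdot)$.

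First I would handle the time derivative together with the divergence part. Treating $Q$ as $C^2$ (as the excerpt instructs), the chain rule gives $\partial_t b = dQ(v)\,\partial_t v$ and, for the second-order spatial operator,
\begin{equation*}
-\divg(A\nabla b) = -\sum_{i,j}\partial_{x_i}\bigl(a_{ij}\,\partial_{x_j}(Q\circ v)\bigr) = -\sum_{i,j}\partial_{x_i}\Bigl(a_{ij}\,dQ(v)\frac{\partial v}{\partial x_j}\Bigr).
\end{equation*}
Expanding the outer derivative by the product rule produces two groups of terms: one in which $\partial_{x_i}$ falls on $dQ(v)$, yielding $-\sum_{i,j}a_{ij}\sk{d^2Q(v)\frac{\partial v}{\partial x_i}}{\frac{\partial v}{\partial x_j}}$ after recognizing the bilinear-form notation from \eqref{pazardzhiskho horo} and the display after \eqref{argerich}; and one in which $\partial_{x_i}$ falls on $a_{ij}\frac{\partial v}{\partial x_j}$, giving $-dQ(v)\sum_{i,j}\partial_{x_i}(a_{ij}\frac{\partial v}{\partial x_j}) = -dQ(v)\,\divg(A\nabla v)$. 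Combining this last term with $\partial_t b = dQ(v)\,\partial_t v$ and with the potential contribution $Vb = V\,Q(v)$ from the zeroth-order part of $L$, and using $\partial_t v = \divg(A\nabla v) - Vv$ componentwise, the terms $dQ(v)\divg(A\nabla v)$ cancel, leaving $dQ(v)(-Vv) + VQ(v) = V[Q(v) - dQ(v)v]$. Adding back the genuinely second-order piece gives exactly \eqref{chainrule}.

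The main point requiring care — and the only place where the hypothesis that $A$ is \emph{real} enters — is the identification of the second-order term. Since $Q$ is a function of the two complex variables $(\zeta,\eta)$, the chain rule applied twice produces a sum over holomorphic and antiholomorphic derivatives $\partial^2 Q/\partial z_k\partial z_m$, $\partial^2 Q/\partial\bar z_k\partial z_m$, etc., contracted against $\partial v_k/\partial x_i$ and $\partial v_m/\partial x_j$ (and their conjugates). The coefficient attached to this whole contraction is $a_{ij}(x)$, which must be pulled out \emph{unconjugated} for the expression to match $\sk{d^2Q(v)\,\partial v/\partial x_i}{\partial v/\partial x_j}$ as defined in the excerpt; this works precisely because $a_{ij}(x)\in\R$, so conjugating the antiholomorphic factors does not disturb it. Finally I would invoke the symmetry \eqref{argerich} to write the sum in the stated symmetric form. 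The only subtlety beyond bookkeeping is the already-flagged regularity issue: $Q$ is merely $C^1$, not $C^2$, so strictly speaking one works with a mollified $Q_\varepsilon$, derives \eqref{chainrule} for it, and passes to the limit — but per the excerpt's convention this is suppressed, with the reader referred to \cite{DV-Sch, CD, DV}.
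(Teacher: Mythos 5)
Your proposal is correct and follows essentially the same route as the paper: a direct chain-rule computation in which the first-order terms cancel because $L'v_k=0$ (the paper organizes this by factoring out $\pd_{z_k}Q$ and $\pd_{\bar z_k}Q$ rather than by operator pieces, but the computation is identical), leaving the second-order form and the term $V[Q(v)-dQ(v)v]$. One small correction: the realness of $A$ is needed not only to pull $a_{ij}$ unconjugated through the second-order term but also in the first-order cancellation, to identify $\divg(A\nabla \overline{v_k})$ with $\overline{\divg(A\nabla v_k)}$ so that $\overline{L'v_k}=0$ can be invoked --- which is exactly the point the paper makes explicitly.
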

\begin{proof}
By applying the chain rule repeatedly we calculate
$$
\frac{\pd b }{\pd x_j}(x,t)=
\sum_{k=1}^2\Big[
\frac{\pd Q}{\pd z_k}( v(x,t))\frac{\pd v_k}{\pd x_j}(x,t)+
\frac{\pd Q}{\pd
\overline{z_k}}( v(x,t))\frac{\pd \overline{v_k}}{\pd x_j}(x,t)\Big]\,
$$
and
$$
\frac{\pd^2 b }{\pd x_i\pd x_j} = 
\sk{d^2 Q(v) 
\frac{\pd  v}{\pd x_i}}{\frac{\pd  v}{\pd x_j}} 
+ \sum_{k=1}^2
\left(
 \frac{\pd Q}{\pd z_k}(v)\frac{\pd^2 v_k}{\pd x_i\pd x_j}
+\frac{\pd Q}{\pd \overline{z_k}}(v)\frac{\pd^2
\overline{v_k}}{\pd x_i\pd x_j}\right)\,.
$$

Now we are ready to compute
\begin{equation}
\label{13/4} 
L'b ={\pd b \over \pd t}- \divg (A\nabla b )+Vb \,.
\end{equation}
Fix $k$ and consider only those terms in \eqref{13/4} which
contain 
${\pd_{z_k} Q}( v)$
as a factor. We get
\begin{equation*}
{\pd Q\over\pd z_k}( v(x,t))\,\Big[\frac{\pd v_k}{\pd t}(x,t)-\divg(A\nabla v_k)(x,t)\Big]\,,
\end{equation*}
which is the same as 
\begin{equation*}
{\pd Q\over\pd z_k}( v(x,t))\,(L'v_k)(x,t)-{\pd Q\over\pd z_k}(v(x,t))\,V(x)v_k(x,t)\,.
\end{equation*}
But each $v_k$ is, by \eqref{ella sunshine}, a function of the form $e^{-Lt}\varphi$, therefore
$L'v_k=0$.

If we now factor out 
${\pd_{\bar z_k}Q}( v)$,
we get
\begin{equation*}
\label{puklaura0}
 {\pd Q\over\pd \overline{z_k}}
\Big(\overline{\frac{\pd v_k}{\pd t}-\divg (\overline{A}\,\nabla
v_k)+V(x)v_k}\Big)- {\pd Q\over\pd \overline{z_k}}V(x)\overline{v_k}\,,
\end{equation*}
where $\overline{A}$ is the matrix with entries
$\{\overline{a_{ij}}\}$. 
Since $A$ was assumed to be real, the term in parentheses is again equal to $L'v_k$, hence it disappears.

Putting together the remaining terms gives \eqref{chainrule}.
\end{proof}

\begin{proof}[Proof of Proposition \ref{lovcen}.]

We would like to rewrite 
\begin{equation}
\label{seminar0211}
\sum_{i,j=1}^n a_{ij}(x) \sk{d^2 Q( v) 
\frac{\pd  v}{\pd x_i}(x,t)}{\frac{\pd v}{\pd x_j}(x,t)} \,
\end{equation}
in terms of quadratic forms, i.e. in a way where instead  of $\pd_{x_i} v$ and $\pd_{x_j} v$ we would get identical vectors. This would clear the road for applying Theorem \ref{bejaz} \eqref{kupres}. 

For any square matrix  $A=[a_{ij}]$ let 
\begin{equation}
\label{mozartkugel}
\tilde a_{ij}={a_{ij}+a_{ji}\over 2}\,
\end{equation}
and 
$
\widetilde A=[\tilde a_{ij}].
$
If $A$ is real accretive, then so is $\widetilde A$, but it is, in addition, also symmetric.

First note that the symmetry \eqref{argerich} quickly implies 
$$
\eqref{seminar0211}=
\sum_{i,j=1}^n \tilde a_{ij}(x) \sk{d^2 Q( v) 
\frac{\pd  v}{\pd x_i}(x,t)}{\frac{\pd v}{\pd x_j}(x,t)} \,.
$$
Consequently, we get
\begin{equation*}
\label{danrusije} 
\aligned 
\eqref{seminar0211}&=\sum_{k,m=1}^2 \frac{\pd^2 Q}{\pd z_k \pd z_m}(v)\sum_{i,j=1}^n \tilde a_{ij}(x)\frac{\pd v_k}{\pd
x_i}\frac{\pd  v_m}{\pd x_j} 
+\sum_{k,m=1}^2 \frac{\pd^2 Q}{\pd z_k  \pd\bar z_m}(v)\sum_{i,j=1}^n\tilde a_{ij}(x)
\frac{\pd v_k}{\pd x_i}\  {\frac{\pd\overline{v_m}}{\pd x_j}}
\\ & 
+\sum_{k,m=1} ^2 
\frac{\pd^2 Q}{\pd\bar z_k \pd z_m}(v)\sum_{i,j=1}^n\tilde a_{ij}(x)
\frac{\pd\overline{v_k}}{\pd x_i}\frac{\pd v_m}{\pd x_j}
+\sum_{k,m=1}^2  \frac{\pd^2 Q}{\pd\bar z_k \pd\bar z_m}(v) \sum_{i,j=1}^n\tilde a_{ij}(x)\frac{\pd\overline{v_k}}{\pd x_i}\,
\frac{\pd\overline{v_m}}{\pd x_j} \,.
\endaligned
\end{equation*}
This is the same as
$$
\aligned 
\sum_{k,m=1}^2 & \frac{\pd^2 Q}{\pd z_k \pd z_m}(v) \skal{\widetilde A\nabla v_k}{\nabla v_m}
+\sum_{k,m=1}^2 \frac{\pd^2 Q}{\pd z_k \pd\overline z_m}(v) \skal{\widetilde A \nabla v_k}{\nabla \overline{v_m}}
\\ \ 
+\sum_{k,m=1}^2 & \frac{\pd^2 Q}{\pd\overline z_k \pd z_m}(v) \skal{\widetilde A\nabla \overline{v_k}}{\nabla v_m}
+\sum_{k,m=1}^2 \frac{\pd^2 Q}{\pd \overline z_k \pd\overline z_m}(v) \skal{\widetilde A\nabla \overline{v_k}}{\nabla \overline{v_m}}
\,.
\endaligned
$$

Matrix $\widetilde A$ is real and symmetric. The accretivity condition implies that it is also positive-definite, hence it admits a real (positive-definite) square root $\widetilde A^{1/2}$. By introducing new vectors 
$$
\Theta_k: = \widetilde A^{1/2} \nabla v_k=:(\theta^k_1,\hdots,\theta^k_n)
$$ 
and using that $\widetilde A^{1/2}\nabla \overline{v_k}=\overline{\widetilde A^{1/2} \nabla v_k}$ we can rewrite the last sum again as
$$
\aligned
\sum_{k,m=1}^2 
& \left( 
    \frac{\pd^2 Q}{\pd     z_k \pd      z_m} (v) \skal{\Theta_k}{\Theta_m}
  + \frac{\pd^2 Q}{\pd     z_k \pd \bar z_m} (v) \skal{\Theta_k}{\overline{\Theta_m}} + \frac{\pd^2 Q}{\pd\bar z_k \pd z_m} (v) \skal{\overline{\Theta_k}}{\Theta_m}
  \right.\\
&
\left.+ \frac{\pd^2 Q}{\pd\bar z_k \pd \bar z_m} (v) \skal{\overline{\Theta_k}}{\overline{\Theta_m}}
\right)\\
 = 
\sum_{j=1}^n & \sum_{k,m=1}^2 
\left( 
\frac{\pd^2 Q}{\pd z_k \pd z_m} (v)\,\theta^k_j \theta^m_j 
+\frac{\pd^2 Q}{\pd z_k \pd\bar z_m} (v)\,\theta^k_j \bar\theta^m_j
+\frac{\pd^2 Q}{\pd\bar z_k \pd z_m} (v)\,\bar\theta^k_j \theta^m_j 
+\frac{\pd^2 Q}{\pd\bar z_k \pd\bar z_m} (v)\,\bar\theta^k_j  \bar\theta^m_j
\right)
\,.
\endaligned
$$
This expression we can write in turn as 
$\sum_{j=1}^n \sk{d^2Q(v)\theta_j}{\theta_j}$, 
where 
$\theta_j:=(\theta_j^1,\theta_j^2)\in\C^2$ for $j\in\{1,\hdots,n\}$.
So we proved 
$$
\sum_{i,j=1}^n a_{ij}(x) \sk{d^2 Q( v) 
\frac{\pd  v}{\pd x_i}(x,t)}{\frac{\pd v}{\pd x_j}(x,t)} 
=\sum_{j=1}^n \sk{d^2Q(v)\theta_j}{\theta_j}\,.
$$
We can finally apply Lemma \ref{plav} and properties \eqref{kupres}, \eqref{blagaj} from Theorem \ref{bejaz}. The result is 
$$
\aligned
L'b(x,t) &= \sum_{j=1}^n
\sk{-d^2Q(v)\theta_j}{\theta_j}+V(x)[Q( v)-dQ( v)\, v]\\
&\geq \delta\sum_{j=1}^n
\left(\tau|\theta_j^1|^2+\tau^{-1}|\theta_j^2|^2\right)+V(x)\delta\!\left(\tau |v_1|^2+\tau^{-1}|v_2|^2\right)\\
& \geq
2\delta\sqrt{V|v_1|^2+\sum_{j=1}^n|\theta_j^1|^2}\sqrt{V|v_2|^2+\sum_{j=1}^n|\theta_j^2|^2}\\
&=2\delta\sqrt{V|v_1|^2+|\Theta_1|^2}\sqrt{V|v_2|^2+|\Theta_2|^2}
\,.
\endaligned
$$
Recall that $\Theta_k=\widetilde A^{1/2}\nabla v_k$ and 
use that 
$\widetilde A\xi\cdot\overline\xi=\text{Re\,}(A\xi\cdot\overline\xi)$. 
Therefore, \eqref{elli} gives
$$
\geq 2\delta\min\{1,\gamma\}\sqrt{V|v_1|^2+|\nabla v_1|^2}\sqrt{V|v_2|^2+|\nabla v_2|^2}
\,.
$$
Finally, by \eqref{vaha} and \eqref{ella sunshine},
this is the same as
$
2\delta\min\{1,\gamma\}|\tilde f(x,t)|_*|\tilde g(x,t)|_*\,.
$
\end{proof}

\section{Integration by parts: proof of Proposition \ref{planinom}}
\label{above}

In order to prove Proposition \ref{planinom} we will 
first integrate by parts, as summarized in the following inequality.

\begin{lema}
\label{byparts} 
Let $A$ be a real accretive matrix (not necessarily symmetric) and  
 $p\geq 2$.
Then
\begin{equation*}
\label{byparts1} 
\limsup_{R\rightarrow\infty} \int_0^T\int_{\R^n}
\psi_R(x)L' b (x,t)\, dx\, dt
\leqslant 
\limsup_{R\rightarrow \infty}
\int_0^T\int_{\R^n} \psi_R(x)\,\frac{\pd b }{\pd t}(x,t) \,dx\,dt\,.
\end{equation*}
\end{lema}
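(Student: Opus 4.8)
The plan is to expand $L'b = \partial_t b - \divg(A\nabla b) + Vb$ under the integral and show that, after integrating the divergence term by parts against $\psi_R$, everything except the $\partial_t b$ term is either nonpositive or vanishes in the limit $R\to\infty$. Concretely, I would write
$$
\int_0^T\!\!\int_{\R^n}\psi_R\, L'b\,dx\,dt
= \int_0^T\!\!\int_{\R^n}\psi_R\,\frac{\partial b}{\partial t}\,dx\,dt
+ \int_0^T\!\!\int_{\R^n} \nabla\psi_R\cdot (A\nabla b)\,dx\,dt
+ \int_0^T\!\!\int_{\R^n}\psi_R\, V b\,dx\,dt,
$$
where the middle term comes from integrating $-\int \psi_R\,\divg(A\nabla b)$ by parts (the boundary terms at infinity vanish since $\psi_R$ is compactly supported). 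So the lemma reduces to two claims: first, $\int_0^T\!\int \psi_R V b\,dx\,dt \leq 0$; and second, the gradient term $\int_0^T\!\int \nabla\psi_R\cdot(A\nabla b)\,dx\,dt \to 0$ as $R\to\infty$.

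For the first claim I would use that $b = Q\circ v$ with $Q\leq 0$ (by \eqref{rupkina} and Theorem \ref{bejaz}\eqref{anterija}, $Q = -\frac12\phi \leq 0$), that $V\geq 0$, and that $\psi_R\geq 0$; hence $\psi_R V b \leq 0$ pointwise and the integral is $\leq 0$ for every $R$, so it stays $\leq 0$ in the limsup. For the second claim, the key point is that $\nabla\psi_R$ is supported on the annulus $R\leq |x|\leq 2R$ and satisfies $|\nabla\psi_R| \leq C/R$, so the term is bounded by $\frac{C\|A\|_\infty}{R}\int_0^T\!\int_{R\leq|x|\leq 2R}|\nabla b|\,dx\,dt$. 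I would then estimate $|\nabla b| = |\nabla(Q\circ v)|$ via the chain rule and the gradient bounds \eqref{mravalzhamieri} on $Q$ (equivalently on $\partial_\zeta Q$, $\partial_\eta Q$), reducing matters to controlling $\|v_k\|$ and $\|\nabla v_k\|$, i.e. $\|P_tf\|$, $\|\nabla P_tf\|$, on the annulus, integrated in $t\in(0,T)$. Using $L^p$–$L^2$ or $L^2$ bounds for the semigroup together with the off-diagonal (Gaffney-type) decay of $P_t$ and its gradient — which gives decay faster than any polynomial in $R$ once $f,g$ have compact support and $|x|\sim R$ is large — one gets that $\frac1R\int_0^T\!\int_{R\leq|x|\leq 2R}|\nabla b|\,dx\,dt \to 0$.

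The main obstacle I anticipate is precisely the decay estimate for the annular integral: one must justify that $b$ (hence $\nabla b$) decays sufficiently in $x$ uniformly for $t\in(0,T)$, which requires either $L^2$ off-diagonal estimates for $(P_t)$ and $(\sqrt t\,\nabla P_t)$ — available for accretive divergence-form operators with bounded measurable coefficients — or a more hands-on argument exploiting that $f,g$ are smooth and compactly supported. A secondary technical point, glossed over above, is that $Q$ is only $C^1$, not $C^2$, along $\{v=0\}$ and $\{u^p=v^q\}$; as the authors note, this is handled by mollifying $Q$, and the integration by parts should be performed on the mollified function and the limit taken afterward. Apart from these, the argument is a routine integration by parts plus sign-chasing.
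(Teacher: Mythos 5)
Your proposal is correct and follows essentially the same route as the paper: drop the $Vb\leq 0$ term, integrate the divergence term by parts against $\psi_R$, use $|\nabla\psi_R|\lesssim R^{-1}$ and the support of $\nabla\psi_R$ on the annulus, bound $|\nabla b|$ via the chain rule and the gradient estimates on $Q$, and kill the annular integral with Gaffney-type off-diagonal estimates for $P_t$ and $\sqrt{t}\,\nabla P_t$ (plus $L^\infty$-contractivity for the $|P_tf|^{p-1}$ factor, which is the one detail the paper spells out by splitting into the regions $|P_tf|^p\leq |P_tg|^q$ and its complement). You also correctly flag the mollification of $Q$ as the remaining technical point, exactly as the authors do.
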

\begin{proof}
We first recall \eqref{13/4} and notice that $Vb \leqslant 0$. Hence it suffices to prove that
$$
\limsup_{R\rightarrow\infty} \int_0^T\int_{\R^n}
\psi_R(x)\,\divg (A\nabla b )(x,t)\, dx\, dt=0\,.
$$
Because of the integration by parts and the uniform boundedness of $A$ it is sufficient to show %, for any fixed $t$, 
\begin{equation*}
\label{byparts2} 
\lim_{R\rightarrow \infty}
\int_0^T\int_{\omega_R}
|\nabla \psi_R (x)|\,|\nabla b (x,t)|
\,dx \, dt=0\,.
\end{equation*}
Here $\omega_R:={\rm supp}\,\nabla\psi_R=\mn{x\in\R^n}{R\leqslant|x|\leqslant 2R}$. When 
$R$ is large, 
\begin{equation}
\label{scarlatti k141}
d(\omega_R,\text{supp\,}f)\sim R\,. 
\end{equation}

Since 
$|(\nabla \psi_R )(x)|\leqslant R^{-1}\nor{\nabla\psi}_\infty$, our inequality boils down to showing
\begin{equation*}
\label{byparts2} 
\lim_{R\rightarrow \infty}
\int_0^T\int_{\omega_R}
|\nabla b (x,t)|\,dx \, dt
=0\,.
\end{equation*}
By \eqref{zelen bore}, this will in turn follow once we demonstrate
\begin{equation}
\label{gledala}
\lim_{R\rightarrow \infty}
\int_0^T\int_{\omega_R}
\left(
\Big|\frac{\pd Q}{\pd \zeta}( v)\Big| |\nabla P_t f(x)|+
\Big|\frac{\pd Q}{\pd \eta} ( v)\Big| |\nabla P_t g(x)|
\right)
\,dx \, dt=0\,.
\end{equation}
For that purpose we resort to the so-called {\it off-diagonal
estimates} for operators $L$ which were proven in \cite[Section 2.3]{A}. 
That is
a result stating that for any $h$, supported in a closed set
$E\subset \R^n$, the following holds:
\begin{equation}
\label{offdiag1} \|T_t h\|_{L^2(F)} \leq C
e^{-\frac{c}{t}\,d(E,F)^2} \|h\|_{L^2(E)}\,.
\end{equation}
Here $F\subset \R^n$ is closed and $ T_t $ can be anything from
$P_t,  tLP_t, \sqrt{t}\nabla P_t$.
While the proof in \cite{A} was done for the case $V=0$, it can be repeated so as to cover the case of general nonnegative potential $V\in L_{loc}^1(\R^n)$.

In order to estimate the partial derivatives of $Q$ with respect to $\zeta$ and $\eta$ we first recall \eqref{mravalzhamieri}, which suggests considering separately the domains
$$
\Lambda_1 := \mn{(\zeta,\eta)\in\C^2}{|\zeta|^p \leq |\eta|^q}
\,\hskip 30pt {\rm and}\,\hskip 30pt 
\Lambda_2 := \mn{(\zeta,\eta)\in\C^2}{|\zeta|^p > |\eta|^q}
\,.
$$
The upper estimates of $\nabla Q$ in $\Lambda_1$ and $\Lambda_2$ can now be directly inferred from  \eqref{rupkina} and \eqref{mravalzhamieri}.

Fix $t\geqslant 0$ and define, for  $j\in\{1,2\}$,
$$
{\mathcal V}_j={\mathcal V}_j(t):=\mn{x\in\R^n}{ v(x,t)\in\Lambda_j}\,.
$$

\begin{enumerate}[1.)]
\item
Let us estimate $|\pd_{\zeta} Q(v)| |\nabla P_t f|$ in domain ${\mathcal V}_1$, that is, when $|P_t f|^p \leq |P_t g|^q$. This condition is equivalent to $|P_t f|^{p-1} \leq |P_t g|$, therefore, by  \eqref{rupkina} and \eqref{mravalzhamieri},
\begin{align}
\label{lile}
 \Big|\frac{\pd Q}{\pd \zeta}( v)\Big|
\leq C \,|P_t g| \,.
\end{align}
This and the H\"older's inequality imply
$$
\int_{\omega_R} 
{\chi}_{{\mathcal V}_1}\,
\Big|\frac{\pd Q}{\pd \zeta}(v)\Big|
|\nabla P_t f|
\leq 
\frac{C}{\sqrt t}\, 
\nor{P_tg}_{L^2(\omega_R)}\nor{\sqrt t\nabla P_tf}_{L^2(\omega_R)}
\,.
$$
By the off-diagonal estimates \eqref{offdiag1} and the similarity \eqref{scarlatti k141} we get
$$
\leq \frac{C(f,g,n)}{\sqrt t}\,e^{-\frac ct \, R^2}\,.
$$
Finally, integrate in $t$ and send $R$ to infinity. Now it is clear that
$$
\lim_{R\rightarrow\infty}\int_0^T\int_{\omega_R} 
{\chi}_{{\mathcal V}_1(t)}(x)\,
\Big|\frac{\pd Q}{\pd \zeta}(v(x,t))\Big|
|\nabla P_t f(x)|
\,dx \,dt=0\,.
$$

\item
In the domain ${\mathcal V}_2$ we similarly get
\begin{equation*}
\label{11/4} 
\Big|\frac{\pd Q}{\pd \zeta}(v)\Big|
\leq  
C |P_t f|^{p-1} \,.
\end{equation*}
Applying the H\"older's inequality again gives
$$
\int_{\omega_R}\chi_{{\mathcal V}_2}
\Big|\frac{\pd Q}{\pd \zeta}(v)\Big||\nabla P_t f| 
\leq
C\bigg(
\int_{\omega_R}|P_t f|^{2(p-1)}
\bigg)^{1/2}
\nor{\nabla P_tf}_{L^2(\omega_R)}\,.
$$
Since the semigroup $P_t$ is $L^{\infty}$-contractive \cite[Proposition 4.32]{O} we can continue as
$$
\leqslant
C\nor{f}_\infty^{p-1}|\omega_R|^{1/2}
\nor{\nabla P_tf}_{L^2(\omega_R)}\,.
$$
Note that $|\omega_R|= C\, R^n$. Together with \eqref{offdiag1} this implies
$$
\leqslant
\frac{C(f,p,n)}{\sqrt t}\,R^{n/2}e^{-\frac ct \, R^2}\,,
$$
which again disappears after performing $\lim_{R\rightarrow\infty}\int_0^T\,dt$.

\item
Finally, the estimates with respect to $\eta$ can be treated without splitting the cases $x\in{\mathcal V}_{1,2}(t)$.
Indeed, for any $(x,t)\in\R^{n+1}_+$ we have, by \eqref{rupkina} and \eqref{mravalzhamieri},
$$
\aligned \Big|\frac{\pd Q}{\pd \eta}(v)\Big|\leq 
C(|P_t g|+1)
\,.
\endaligned
$$
Thus we got a sum of two terms, of which the first is  the same as in \eqref{lile}, hence it
become zero after multiplying with $|\nabla P_tg|$, integrating and taking the limit. As for the last remaining term (i.e. the constant $1$), the H\"older's inequality and \eqref{offdiag1} give
$$
\int_{\omega_R} 
|\nabla P_t g|
\leqslant
\frac{|\omega_R|^{1/2}}{\sqrt t}\,
\nor{{\sqrt t}\nabla P_t g}_{L^2(\omega_R)}
\leqslant
\frac{C(g,n)}{\sqrt t}\,R^{n/2}e^{-\frac ct \, R^2}\,.
$$

\end{enumerate}
These estimates combine into \eqref{gledala}, which finishes the proof. 
\end{proof}

\begin{proof}[Proof of Proposition \ref{planinom}.]
In view of Lemma \ref{byparts} it is enough to prove 
\begin{equation*}
\label{spasenie}
\limsup_{R\rightarrow\infty}
\int_0^T\int_{\R^n} \psi_R(x) \frac{\pd b }{\pd t}(x,t)\,dx\,dt
\leq \|f\|_p^p +
\|g\|_q^q\,.
\end{equation*}
We start by another integration by parts:
$$
\aligned
\int_0^T\int_{\R^n} \psi_R(x) \frac{\pd b }{\pd t}(x,t)\,dx\,dt
& =
\int_{\R^n}\psi_R(x)\,[b (x,T)-b (x,0)]\, dx \,.
\endaligned
$$
Remember that $Q$ was a nonpositive function, therefore $b(x,T)\leqslant 0$, by \eqref{zelen bore}.
Moreover, \eqref{zelen bore} and \eqref{rupkina} also mean we can continue as 
$$
\leqslant
 \frac12\int_{\R^n}\phi(|f(x)|,|g(x)|)\,\psi_R(x)\, dx \,.
$$
By Theorem \ref{bejaz} \eqref{anterija} we get
$$
\leqslant
\int_{\R^n}(|f(x)|^p+|g(x)|^q)\,\psi_R(x)\, dx \,,
$$
which tends to $\nor{f}_p^p+\nor{g}_q^q$ as $R\rightarrow\infty$, by the dominated convergence theorem.
\end{proof}

\end{document}